\newcommand{\dd}{\mathrm{d}}
\newcommand{\E}{\mathbb{E}}
\newcommand{\1}{\textbf{1}}
\newcommand{\R}{\mathbb{R}}
\newcommand{\p}[1]{\mathbb{P}\left( #1 \right)}
\newcommand{\scal}[2]{\left\langle #1, #2 \right\rangle}
\newtheorem{lemma}{Lemma}
\newtheorem{theorem}{Theorem}
\theoremstyle{remark}
\newtheorem*{remark*}{Remark}
\theoremstyle{definition}
\newtheorem*{conjecture*}{Conjecture}
\begin{document}

\title{A multidimensional analogue of the Rademacher-Gaussian tail comparison}

\author{Piotr Nayar and Tomasz Tkocz}
\thanks{PN supported in part by NCN grant DEC-2012/05/B/ST1/00412}

\begin{abstract}
We prove a dimension-free tail comparison between the Euclidean norms of sums of independent random vectors uniformly distributed in centred Euclidean spheres and  properly rescaled standard Gaussian random vectors.
\end{abstract}

\maketitle

{\footnotesize
\noindent {\em 2010 Mathematics Subject Classification.} Primary 60E15; Secondary 60G15, 60G50.

\noindent {\em Key words.} probability inequalities, tail comparison, bounds for tail probabilities, Gaussian random vectors, uniform distributions in Euclidean spheres}

\subsection*{Introduction}\label{sec:intro}

Tail comparison bounds, such as Hoeffding's inequality, have always played a crucial role in probability theory. When specified to concrete examples, very precise estimates for tail probabilities are usually known. For instance, if $\epsilon_1, \epsilon_2, \ldots$ are independent random variables each taking values $\pm 1$ with probability $\frac{1}{2}$ and $g_1, g_2, \ldots$ are independent standard Gaussian random variables, then for every $m \geq 1$, real numbers $a_1, \ldots, a_m$ and positive $t$,
\begin{equation}\label{eq:pin}\tag{P}
\p{|a_1\epsilon_1+\ldots+a_m\epsilon_m|>t} \leq c\cdot\p{|a_1g_1+\ldots+a_mg_m|>t}
\end{equation}
for some absolute constant $c$. This inequality was first proved by Pinelis in \cite{Pin} with $c \approx 4.46$. Talagrand in \cite{T} treated the case of independent (but not necessarily identically distributed) bounded random variables by means of the Laplace transform establishing similar Gaussian tail bounds. Bobkov, G\"otze and Houdr\'e obtained a bigger constant $c \approx 12.01$ in \eqref{eq:pin}, but their inductive argument was much simpler (see \cite{Bob}). Only very recently the best constant (equal approximately $3.18$) has been found (see \cite{Ben}). 

Oleszkiewicz conjectured the following multidimensional generalisation of Pinelis' Rademacher-Gaussian tail comparison \eqref{eq:pin}: fix $d \geq 1$, let $\xi_1, \xi_2, \ldots$ be independent random vectors uniformly distributed in the Euclidean unit sphere $S^{d-1}\subset \R^d$ and let $G_1, G_2, \ldots$ be independent standard Gaussian random vectors in $\R^d$ with mean zero and identity covariance matrix; there exists a universal constant $C$ such that for every $m \geq 1$, real numbers $a_1, \ldots, a_m$ and $t > 0$ we have
\begin{equation}\label{eq:KO}\tag{KO}
\p{\left\|\sum_{i=1}^ma_i\xi_i\right\|>t} \leq C\cdot\p{\left\|\sum_{i=1}^ma_i\frac{G_i}{\sqrt{d}}\right\|>t}.
\end{equation}
Here and throughout, $\|\cdot\|$ denotes the standard Euclidean norm in $\R^d$. 

Note that the normalisation is chosen so that the vectors $\xi_1$ and $G_1/\sqrt{d}$ have the same covariance matrix. Plainly, when $d=1$, \eqref{eq:KO} reduces to \eqref{eq:pin}. For general $d$, it is possible to deduce \eqref{eq:KO} with $C = O(\sqrt{d})$ from Theorem 2 in \cite{Led}. 

The goal of this note is to positively resolve Oleszkiewicz's conjecture. We shall show the following two theorems which are our main results. The latter will easily follow from the former.

\begin{theorem}\label{thm:spheres}
For every $d \geq 2$, inequality \eqref{eq:KO} holds with $C = C_0 = 397$.
\end{theorem}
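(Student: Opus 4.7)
By homogeneity assume $\sum_i a_i^2=1$, and set $S=\sum_i a_i\xi_i$, $T=\sum_i a_i G_i/\sqrt{d}$. Rotational invariance of the Gaussian gives $\|T\|\stackrel{d}{=}\chi_d/\sqrt{d}$, so the right-hand side is the explicit scalar quantity $\p{\chi_d>t\sqrt{d}}$. In particular, for $t$ below a universal threshold $t_0<1$, the Paley--Zygmund inequality applied to $\chi_d^2$ (which has mean $d$ and variance $2d$) yields $\p{\chi_d>t\sqrt{d}}\geq c$ for a universal $c>0$, so the desired inequality holds automatically for $t\leq t_0$ with any $C_0\geq 1/c$. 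All the work is in the regime $t>t_0$, where both tails are small.

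For this regime, the plan is to exploit a one-dimensional recursive description of $\|S\|$. Writing $R_k=\|\sum_{i\leq k}a_i\xi_i\|$ and conditioning on $\xi_1,\dots,\xi_k$, rotational invariance of $\xi_{k+1}$ gives
\begin{equation*}
R_{k+1}^2\stackrel{d}{=}R_k^2+a_{k+1}^2+2a_{k+1}R_k U_{k+1},
\end{equation*}
where $U_{k+1}$ is distributed as the first coordinate of a uniform point on $S^{d-1}$, independent of the past, with density proportional to $(1-u^2)^{(d-3)/2}$ on $[-1,1]$, mean $0$, and variance $1/d$. This reduces the multidimensional problem to a scalar Markov chain driven by a bounded, mean-zero, variance-$1/d$ noise. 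I would then control $\p{R_m>t}$ by Markov's inequality applied to a well-chosen exponential or polynomial moment of $R_m$, computed inductively along the recursion using an appropriate MGF bound for $U_{k+1}$, and optimise the parameters against the explicit density of $\chi_d/\sqrt{d}$. Writing the Gaussian in polar form $G_i=\|G_i\|\eta_i$, $T$ has the same law as $\sum_i a_i\rho_i\eta_i$ with $\rho_i=\|G_i\|/\sqrt{d}$ independent positive scalars of mean-square $1$; viewing the Gaussian sum this way, as the sphere sum with randomly rescaled coefficients, furnishes a natural coupling and suggests that the Gaussian analogue of the recursion above (with driving noise $V/\sqrt{d}$, $V\sim N(0,1)$) may dominate the sphere recursion step-by-step in convex order.

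The principal obstacle is the dimension-freeness of $C_0$. The chi-tail $\p{\chi_d>t\sqrt{d}}$ carries a $d$-dependent polynomial prefactor that must be absorbed precisely, and any route that projects $S$ onto a single direction and then union-bounds over a net in $S^{d-1}$ costs a factor exponential in $d$; the full vector structure must be preserved via the recursion above. The worst cases for the constant $C_0=397$ are likely small $d$ (where $U_{k+1}$ is least Gaussian-like) together with degenerate coefficient vectors such as $a=(1,0,\dots,0)$, for which $\|S\|=1$ almost surely, so that for $t\to 1^-$ the inequality forces $C_0\geq 1/\p{\chi_d/\sqrt{d}>1}$. Matching this natural lower bound while keeping the argument uniform in both $d$ and in the coefficient vector is where the technical heart of the proof will lie.
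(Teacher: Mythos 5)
Your reduction to the scalar recursion
\[
R_{k+1}^2 \stackrel{d}{=} R_k^2 + a_{k+1}^2 + 2a_{k+1}R_k U_{k+1}
\]
is essentially the content of Lemma~\ref{lm:expand} in the paper, and your observation that the small-$t$ regime is handled by a uniform lower bound on $\p{\chi_d > t\sqrt d}$ matches the role of Lemma~\ref{lm:sqrtd} in the base case and in one branch of the inductive step. You have also correctly identified where the constant $C_0$ is forced from below (near-degenerate coefficient vectors, small $d$). So the scaffolding is sound.

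The genuine gap is in the large-$t$ regime, which is where all the difficulty lies, and your two proposed tools would both fail to give a dimension-free constant. Chernoff/MGF tracking along the Markov chain is inherently lossy: the target tail $\p{\chi_d > t\sqrt d}$ decays like $\exp\bigl(-\tfrac d2(t^2-1-2\log t)\bigr)$, so for $t$ bounded away from $1$ the exponent must be reproduced with $o(d)$ error to absorb the prefactors, and a generic exponential-moment bound on $U_{k+1}$ will not achieve this uniformly in $d$ and in the coefficient vector; this is precisely why the Rademacher--Gaussian literature (Pinelis, Bobkov--G\"otze--Houdr\'e, Bentkus--Dzindzalieta) avoids Chernoff and works directly with tail functionals. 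The convex-order alternative is unproven and, even if true, does not transfer to tails, since indicator functions are not convex; converting convex-order domination into a tail inequality again costs a Markov step with slack that is hard to control uniformly.

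What is missing is the right inductive hypothesis: the paper inducts on $m$ with the tail comparison \eqref{eq:KO} itself as the hypothesis. Conditioning on $\xi_1$ (equivalently, on $U_1$) and applying the hypothesis to the remaining $m$ terms reduces the step to comparing the Gaussian measure of a shifted ball with that of a dilated centred ball,
\[
\p{\|G\|\leq R} \ \leq \ \p{\|G-x\|\leq R\sqrt{1+a^2}},\qquad \|x\|=a\sqrt d,\ R\geq\sqrt{d+2},
\]
which is Lemma~\ref{lm:balls}. That inequality in turn hinges on sharp bounds for the ratios $J_{d-3}(b)/J_{d-1}(b)$ of the Laplace-transform integrals $J_d(b)=\int_{-1}^1(1-x^2)^{d/2}e^{bx}\,\dd x$ (Lemma~\ref{lm:Jd}), i.e.\ on a strengthening of log-convexity in $d$ that goes beyond what H\"older gives. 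Nothing in your sketch produces a statement of this kind, and without it there is no mechanism to close the induction with a constant independent of $d$. If you want to salvage your plan, replace the MGF step with a direct tail comparison as the inductive hypothesis, and you will be led precisely to the shifted-ball lemma as the thing you need to prove.
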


\begin{theorem}\label{thm:general}
Let $X_1, X_2, \ldots$ be independent rotationally invariant random vectors having values in the unit Euclidean ball in $\R^d$. Let $G_1, G_2, \ldots$ be independent standard Gaussian random vectors in $\R^d$ with mean zero and identity covariance matrix. Then for every $m \geq 1$, real numbers $a_1, \ldots, a_m$ and $t > 0$ we have
\begin{equation}\label{eq:cor}
\p{\left\|\sum_{i=1}^ma_i X_i\right\|>t} \leq C_0\cdot\p{\left\|\sum_{i=1}^ma_i\frac{G_i}{\sqrt{d}}\right\|>t}, 
\end{equation}
where $\|\cdot\|$ stands for the standard Euclidean norm in $\R^d$ and $C_0=397$. 
\end{theorem}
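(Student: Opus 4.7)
\textbf{Proof plan for Theorem \ref{thm:general}.} The plan is to reduce the general case to Theorem \ref{thm:spheres} by a conditioning argument based on the polar decomposition of rotationally invariant vectors, and then to absorb the resulting radial factors on the Gaussian side using the rotational invariance and $2$-stability (in law) of independent Gaussians.

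First, I would write each rotationally invariant vector $X_i$ in polar form as $X_i = R_i\,\xi_i'$, where $R_i=\|X_i\|\in[0,1]$ and $\xi_i'$ is uniformly distributed on the Euclidean sphere $S^{d-1}$, with $R_i$ and $\xi_i'$ independent; moreover the pairs $(R_i,\xi_i')$ are independent across $i$. Then $\sum_{i=1}^m a_iX_i=\sum_{i=1}^m (a_iR_i)\xi_i'$. Conditioning on $\mathcal{R}=(R_1,\ldots,R_m)$, the inner sum is exactly of the form covered by Theorem \ref{thm:spheres} with deterministic coefficients $a_iR_i$, so
\begin{equation*}
\p{\left\|\sum_{i=1}^m a_iX_i\right\|>t\,\Big|\,\mathcal{R}} \leq C_0\cdot\p{\left\|\sum_{i=1}^m a_iR_i\frac{G_i}{\sqrt d}\right\|>t\,\Big|\,\mathcal{R}}.
\end{equation*}

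Next, I would handle the Gaussian side. Conditionally on $\mathcal{R}$, by independence, rotational invariance and Gaussianity, the vector $\sum_{i=1}^m a_iR_i G_i$ has the same law as $\bigl(\sum_{i=1}^m a_i^2R_i^2\bigr)^{1/2} G$, where $G$ is a standard Gaussian vector in $\R^d$. Since $R_i\in[0,1]$, we have $\sum_{i=1}^m a_i^2R_i^2\leq \sum_{i=1}^m a_i^2$ almost surely, hence
\begin{equation*}
\p{\left\|\sum_{i=1}^m a_iR_i\frac{G_i}{\sqrt d}\right\|>t\,\Big|\,\mathcal{R}} = \p{\|G\|>\frac{t\sqrt d}{\bigl(\sum_i a_i^2R_i^2\bigr)^{1/2}}\,\Big|\,\mathcal{R}} \leq \p{\|G\|>\frac{t\sqrt d}{\bigl(\sum_i a_i^2\bigr)^{1/2}}}.
\end{equation*}
The rightmost quantity is deterministic and equals $\p{\|\sum_{i=1}^m a_iG_i/\sqrt d\|>t}$.

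Combining the two displays and taking expectation over $\mathcal{R}$ yields \eqref{eq:cor}. The only mildly delicate point is the second step, where one must be careful that the map $r\mapsto \p{\|G\|>t\sqrt d/r^{1/2}}$ is monotone in $r\geq 0$ so that the pointwise bound $\sum a_i^2R_i^2\leq \sum a_i^2$ translates into the correct inequality for probabilities; this is immediate because $\|G\|$ is a nonnegative random variable whose tail is a decreasing function of the threshold. No further obstacle arises, and the constant $C_0=397$ from Theorem \ref{thm:spheres} carries over unchanged.
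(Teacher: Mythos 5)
Your proof is correct and follows essentially the same route as the paper: condition on the radial parts $R_i=\|X_i\|$, apply Theorem \ref{thm:spheres} with coefficients $a_iR_i$, and then use $\sum a_i^2R_i^2\leq\sum a_i^2$ together with the monotonicity of the Gaussian tail in the variance. The only cosmetic difference is that the paper introduces a fresh independent sequence $\xi_i$ and notes $X_i\stackrel{d}{=}R_i\xi_i$ (avoiding any fuss about dividing by $\|X_i\|$ when $X_i=0$), but this is equivalent to your polar decomposition.
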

\begin{remark*}
This will no longer hold if we only assume the boundedness of the $X_i$. For example, consider independent $X_i$ taking only two values $(\pm 1, 0, \ldots, 0)$ each with probability $\frac{1}{2}$. Then for, say $a_1 = \ldots = a_m = 1/\sqrt{m}$, $t = 2$, the right-hand side of \eqref{eq:cor} goes to zero when $d$ goes to infinity, whereas the left-hand side does not depend on $d$.
\end{remark*}

\subsection*{Acknowledgements}

The authors would like to thank Krzysztof Oleszkiewicz for introducing them into the subject. They are really grateful to Rafa\l \ \L ata\l a for a discussion concerning Theorem \ref{thm:general}.

\subsection*{Proofs}\label{sec:proofs}

Our proof of Theorem \ref{thm:spheres} is inductive, inspired by the inductive approach to the one dimensional case from \cite{Bob}. In the inductive step, using the spherical symmetry of our problem, we arrive at an inequality comparing the Gaussian volume between centred and shifted balls (Lemma \ref{lm:balls} below). This inequality can be viewed as a multidimensional generalisation of the two point-inequality derived in the inductive step in \cite{Bob}. Its proof leads us to somewhat subtle estimates for the Laplace transform of the first coordinate of $\xi_1$ (Lemma \ref{lm:Jd} below).

We shall need four lemmas. We start with a standard one which will be used to provide numerical values of our constants. We include its proof for completeness (see, e.g. Lemma 2 in \cite{LO}).

\begin{lemma}\label{lm:sqrtd}
Let $d \geq 2$ and $G$ be a standard Gaussian random vector in $\R^d$. Then
\[ 
\p{\|G\| > \sqrt{d}} \geq 1/33 \]
and
\[  
\p{\|G\| > \sqrt{d+2}} \geq 1/397.\]
\end{lemma}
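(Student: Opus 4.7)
The plan is to apply the Cauchy--Schwarz (Paley--Zygmund-type) inequality
\[
\p{Y > t} \geq \frac{(\E(Y - t)_+)^2}{\E(Y - t)^2},
\]
valid for any random variable $Y$ and any real $t$ (write $(Y-t)_+ = (Y-t)_+ \mathbf{1}_{Y > t}$, apply Cauchy--Schwarz, and bound $\E(Y-t)_+^2 \leq \E(Y-t)^2$), to $Y = \|G\|^2 \sim \chi^2_d$, first with $t = d$ for the first inequality and then with $t = d + 2$ for the second. Since $\E Y = d$ and $\mathrm{Var}(Y) = 2d$, the denominator equals $2d$ in the first case and $2d + 4$ in the second, so it only remains to bound $\E(Y - t)_+$ from below.

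For this, I would use the identity $y f_d(y) = d f_{d+2}(y)$ between the chi-squared densities in dimensions $d$ and $d + 2$ (immediate from $f_k(y) \propto y^{k/2-1} e^{-y/2}$). Integrating this against $\mathbf{1}_{y > t}$ yields $\E Y \mathbf{1}_{Y > t} = d \cdot \p{\chi^2_{d+2} > t}$ and hence
\[
\E(Y - t)_+ = d \cdot \p{\chi^2_{d+2} > t} - t \cdot \p{Y > t}.
\]
For $t = d$ direct integration collapses this to the closed form $\E(Y - d)_+ = 2(d/2)^{d/2} e^{-d/2}/\Gamma(d/2)$, which by Stirling's formula is bounded below by a uniform constant multiple of $\sqrt{d}$ for $d \geq 2$; combining with the denominator $2d$ yields a uniform lower bound on $\p{Y > d}$ that comfortably exceeds $1/33$.

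For the second inequality, with $t = d + 2$, the above identity expresses the numerator in terms of $\p{\chi^2_{d+2} > d + 2}$ and $\p{Y > d + 2}$; the first quantity is already covered by the first inequality applied in dimension $d + 2 \geq 4$. The Cauchy--Schwarz bound then becomes a quadratic inequality in $p := \p{Y > d + 2}$, and solving it using the already-established lower bound on $q := \p{\chi^2_{d+2} > d + 2}$ yields $p \geq 1/397$ (one checks by hand that the worst case is the smallest admissible value $d = 2$, and that the margin is comfortable there).

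The main obstacle will be the explicit bookkeeping of the numerical constants $1/33$ and $1/397$. The general scheme above produces universal lower bounds without trouble, but pinning down the exact numbers requires careful use of Stirling's inequality (tracking the $e^{1/(12s)}$ correction) and solving the quadratic from part (ii) explicitly; a direct verification for the smallest values of $d$, where Stirling is loosest, may also be needed.
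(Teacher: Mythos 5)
Your route is genuinely different from the paper's, even though both proofs open with the same one--sided Cauchy--Schwarz step. Writing $S=\|G\|^2-d$, the paper bounds $\p{S>0}\geq\tfrac14(\E|S|)^2/\E S^2$, which with $Y=\|G\|^2$ and $t=d$ is exactly your $\p{Y>t}\geq(\E(Y-t)_+)^2/\E(Y-t)^2$ (since $\E(Y-d)_+=\tfrac12\E|S|$). The divergence comes next: rather than evaluate $\E|S|$, the paper applies H\"older ($\E S^2\leq(\E|S|^4)^{1/3}(\E|S|)^{2/3}$) to reduce the whole thing to a computation of $\E S^4$, which is an entirely elementary fourth--moment exercise. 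You instead compute $\E(Y-d)_+=2(d/2)^{d/2}e^{-d/2}/\Gamma(d/2)$ in closed form (your derivative identity is correct) and then invoke quantitative Stirling. Your variant yields the much stronger constant $\p{\|G\|>\sqrt d}\geq e^{-1/(3d)}/(2\pi)$, at the price of needing Stirling with explicit error; the paper's version is weaker but completely calculus--free. For the second estimate the methods part company entirely: the paper simply factors $\p{\|G\|^2>d+2}\geq\p{g_d^2\geq 3}\p{g_1^2+\cdots+g_{d-1}^2\geq d-1}$ and recycles the first part in dimension $d-1$, whereas you re-run Cauchy--Schwarz at the off-mean threshold $t=d+2$ and feed the identity $y\,f_d(y)=d\,f_{d+2}(y)$ into it.

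One concrete caution on that last step. Working out your quadratic, the discriminant simplifies nicely and the smaller root is
\[
p_-(q)=\frac{dq+1-\sqrt{2dq+1}}{d+2},
\]
which is increasing in $q$ (and in $d$ for fixed $q$), so substituting a lower bound $q_0\leq q$ is legitimate and the worst case is indeed $d=2$. However, with the advertised input $q\geq 1/33$ one gets $p_-(1/33)\approx 4.3\times 10^{-4}$ at $d=2$, which is well below $1/397\approx 2.5\times 10^{-3}$; the crossover to $p_-(1/33)\geq 1/397$ only occurs around $d\geq 9$. So "solving the quadratic using the already-established $1/33$ bound" does not by itself deliver the constant, and the "comfortable margin at $d=2$" claim is wrong under that input. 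The fix is within your framework: either feed in your sharper Stirling-derived bound $q\geq e^{-1/(3(d+2))}/(2\pi)\geq e^{-1/12}/(2\pi)\approx 0.146$, which gives $p_-\approx 0.0084$ at $d=2$, or else verify $d\in\{2,\dots,8\}$ directly (you do flag this possibility). With that adjustment the argument goes through, but as stated the second half understates what is needed.
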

\begin{proof}\renewcommand{\qedsymbol}{}
Let $G= (g_1,\ldots,g_d)$, where $g_1, \ldots, g_d$ are independent standard Gaussian random variables. Let $S = \sum_{i=1}^{d}(g_i^2-1)$. We want to estimate $\p{\|G\|>\sqrt{d}} = \p{S>0}$. Since $0 = \E S = \E S\1_{\{S>0\}} + \E S\1_{\{S\leq 0\}}$, using the Cauchy-Schwarz inequality we get
\[ 
\E|S| = \E S\1_{\{S>0\}} - \E S\1_{\{S \leq 0\}} = 2\E S\1_{\{S>0\}} \leq 2 \sqrt{\E S^2}\sqrt{\p{S>0}}.\]
Moreover, by H\"older's inequality
\[ 
\E S^2 = \E |S|^{4/3}|S|^{2/3} \leq \left(\E|S|^4\right)^{1/3}\left(\E|S|\right)^{2/3}. \]
Combining these bounds and using that $\E(g_i^2-1)^2 = 2$, $\E(g_i^2-1)^4 = 60$ we obtain
\[ 
\p{S>0} \geq \frac14 \frac{(\E|S|)^2}{\E S^2} \geq \frac{1}{4}\frac{(\E S^2)^2}{\E S^4} = \frac{1}{4}\frac{(2d)^2}{54d + 6d^2} = \frac{1}{\frac{54}{d}+6} \geq \frac{1}{33}. \]
For the second part, observe that
\begin{align*}
\p{\|G\|>\sqrt{d+2}} &\geq \p{g_d^2\geq 3, g_1^2+\ldots+g_{d-1}^2\geq d-1} \\
&=  \p{g_d^2\geq 3}\p{g_1^2+\ldots+g_{d-1}^2\geq d-1} \\
&\geq 2\p{g_d>\sqrt{3}}\cdot\frac{1}{33} > \frac{1}{397}.\ \Box
\end{align*}
\end{proof}

The next lemma gives tight estimates for the Laplace transform of the first coordinate of a random vector uniformly distributed in the unit sphere. We hope these estimates are of independent interest, in addition to playing a major role in our proof.

\begin{lemma}\label{lm:Jd}
For $d \geq -1$ and $b \geq 0$ let us denote 
\[
J_d = J_d(b) =\int_{-1}^{1} (1-x^2)^{d/2} e^{bx} \dd x. \]
Then for every $d \geq 2$ we have
\begin{itemize}
\item[(a)] $b^2J_{d+1} = -d(d+1)J_{d-1}+(d+1)(d-1)J_{d-3}$, 
\item[(b)] $\frac{J_{d-3}}{J_{d-1}} \geq \frac{d}{d-1}\left(\frac{1}{2} + \sqrt{\frac{1}{4}+\frac{b^2}{d(d+2)}}\right)$,
\item[(c)] $\frac{J_{d-1}}{J_{d+1}} \leq \frac{d+2}{d+1}\left(\frac{1}{2} + \sqrt{\frac{1}{4}+\frac{b^2}{d(d+2)}}\right)$,
\item[(d)] $J_{d+1}J_{d-3} \geq J_{d-1}^2 \frac{d(d+1)}{(d-1)(d+2)}$.
\end{itemize}
\end{lemma}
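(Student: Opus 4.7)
Part (a) will come from integration by parts. Starting from
$0 = \int_{-1}^{1}\frac{\dd}{\dd x}\bigl[x(1-x^2)^{(d-1)/2}e^{bx}\bigr]\,\dd x$,
whose boundary terms vanish for $d\geq 2$, expanding the derivative by the Leibniz rule and using the identity $x^2(1-x^2)^{(d-3)/2}=(1-x^2)^{(d-3)/2}-(1-x^2)^{(d-1)/2}$ re-expresses the result in terms of $J_{d-3}$, $J_{d-1}$ and the integral $\int_{-1}^{1}x(1-x^2)^{(d-1)/2}e^{bx}\,\dd x$. A separate integration by parts, exploiting $\frac{\dd}{\dd x}(1-x^2)^{(d+1)/2}=-(d+1)x(1-x^2)^{(d-1)/2}$, evaluates the latter to $\frac{b}{d+1}J_{d+1}$, and combining yields (a).

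The three inequalities (b), (c), (d) turn out to be mutually equivalent given (a), so it is enough to prove one of them. Multiplying (b) by $J_{d-1}>0$, squaring, and rearranging yields the equivalent polynomial inequality $(d-1)^2J_{d-3}^2-d(d-1)J_{d-3}J_{d-1}-\frac{db^2}{d+2}J_{d-1}^2 \geq 0$ (the right-hand side of (b) is the larger root of the corresponding quadratic). Rewriting (a) as $(d-1)J_{d-3}=dJ_{d-1}+\frac{b^2}{d+1}J_{d+1}$ and substituting into the left side collapses it to $b^2\bigl[\frac{d-1}{d+1}J_{d-3}J_{d+1}-\frac{d}{d+2}J_{d-1}^2\bigr]$, which is non-negative exactly when (d) holds. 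A parallel manipulation (or solving for $u=J_{d+1}/J_{d-1}$ once (b) is available) extracts (c). The case $b=0$ follows directly from $J_k(0)=\sqrt{\pi}\,\Gamma(k/2+1)/\Gamma(k/2+3/2)$, which gives $J_{d-3}(0)J_{d+1}(0)/J_{d-1}(0)^2=d(d+1)/((d-1)(d+2))$, so (d) (and thus (b), (c)) hold with equality at $b=0$.

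It remains to prove (d). Expanding $e^{bx}$ in its Taylor series and evaluating each $\int_{-1}^{1}x^{2n}(1-x^2)^{k/2}\,\dd x$ as a Beta integral gives
\[
\frac{J_k(b)}{J_k(0)}=\sum_{n=0}^{\infty}\frac{(b^2/4)^n}{n!\,((k+3)/2)_n},
\]
where $(\mu)_n=\mu(\mu+1)\cdots(\mu+n-1)$ is the Pochhammer symbol. Setting $\mu=(d+2)/2$, the parameters $(k+3)/2$ for $k=d-3,d-1,d+1$ are exactly $\mu-1,\mu,\mu+1$, so (d) is equivalent to log-convexity in the parameter of the hypergeometric function ${}_{0}F_{1}(;\mu;z)$ for every $z=b^2/4\geq 0$. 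I would prove this coefficient-wise. Using $(\mu-1)_k/(\mu)_k=(\mu-1)/(\mu+k-1)$ and $(\mu+1)_l/(\mu)_l=(\mu+l)/\mu$, the $z^n$-coefficient of the difference ${}_{0}F_{1}(;\mu-1;z)\,{}_{0}F_{1}(;\mu+1;z)-{}_{0}F_{1}(;\mu;z)^2$ simplifies to $\sum_{k+l=n}\frac{\mu(k-l)+l}{k!\,l!\,(\mu-1)(\mu+l)(\mu)_k(\mu)_l}$; averaging each summand with its $k\leftrightarrow l$ partner rewrites the pair as
\[
\frac{\mu[(k-l)^2+(k+l)]+2kl}{k!\,l!\,(\mu-1)(\mu)_k(\mu)_l(\mu+k)(\mu+l)},
\]
whose numerator is visibly non-negative (we have $\mu\geq 2$ since $d\geq 2$).

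The main obstacle is (d) itself. Since $\frac{d(d+1)}{(d-1)(d+2)}>1$, (d) is strictly stronger than the log-convexity $J_{d-3}J_{d+1}\geq J_{d-1}^2$ that a Cauchy-Schwarz on the integrand would produce, so any pointwise argument in the integral representation is insufficient; the extra factor has to be extracted from the global hypergeometric structure of $J_k$, and the symmetrization trick above is what makes the gain visible at the level of individual Taylor coefficients.
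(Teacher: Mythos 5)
Your proof is correct, and for the inequalities it takes a genuinely different route from the paper. The paper proves \emph{(b)} by backwards induction on $d$: it shows \emph{(b)} is equivalent to a lower bound on $J_{d+1}/J_{d+3}$, checks this for all large $d$ (where the right-hand side falls below $1$), and then verifies the induction step via a monotonicity estimate; \emph{(d)} is deduced at the end from \emph{(b)} together with \emph{(c)}. You reverse the logical flow: you first prove \emph{(d)} directly, by identifying $J_k(b)/J_k(0)$ with the confluent-type series $\sum_n \frac{(b^2/4)^n}{n!\,((k+3)/2)_n}$ and proving coefficient-wise positivity of ${}_0F_1(;\mu-1;z)\,{}_0F_1(;\mu+1;z) - {}_0F_1(;\mu;z)^2$ via the symmetrization $k\leftrightarrow l$, and then you recover \emph{(b)} and \emph{(c)} from \emph{(d)} through the quadratic relation supplied by \emph{(a)}. (The paper actually records in a remark that \emph{(d)} normalizes to $\bar J_{d-3}\bar J_{d+1}\ge \bar J_{d-1}^2$, i.e.\ exactly the log-convexity statement you prove, but does not pursue this route.) Your derivation of \emph{(a)} differs cosmetically (you differentiate $x(1-x^2)^{(d-1)/2}e^{bx}$ once and then integrate by parts again, rather than writing $b^2J_{d+1}$ as the integral of a second derivative), but the computation is the same in substance. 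The trade-off: your argument is shorter, self-contained in the algebra of the series, and makes the role of log-convexity explicit; the paper's backwards induction avoids any appeal to the Gamma/Pochhammer expansion and stays purely at the level of the integrals. One small stylistic note: in the symmetrization step, the displayed expression is the sum of the $(k,l)$ and $(l,k)$ terms rather than their average (a missing factor $\tfrac12$), which of course does not affect the sign; and it is worth stating explicitly that the passage from \emph{(b)} to the polynomial inequality is an equivalence because the smaller root $1-\alpha$ of the associated quadratic is nonpositive while $(d-1)J_{d-3}/(dJ_{d-1})$ is positive.
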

\begin{proof}
\emph{(a)} Integrating by parts twice we get $
b^2J_{d+1} = \int_{-1}^{1} \left(\sqrt{1-x^2}^{d+1}\right)'' e^{bx} \dd x$,
which, after computing the second derivative in the above expression, easily leads to the desired relation.

For the proof of \emph{(b)} and \emph{(c)} let us first observe that due to \emph{(a)} these two assertions hold true for $b=0$. We then show that for $b>0$, \emph{(b)} and \emph{(c)} are equivalent. Indeed, part \emph{(a)} yields 
\begin{equation}\label{j/j}
\frac{J_{d-3}}{J_{d-1}} = \frac{d}{d-1}+ \frac{b^2}{(d+1)(d-1)}\frac{J_{d+1}}{J_{d-1}}, \qquad d \geq 2.
\end{equation}
Thus, \emph{(b)} is equivalent to 
\[
\frac{b^2}{(d+1)(d-1)}\frac{J_{d+1}}{J_{d-1}} \geq \frac{d}{d-1}\left(-\frac{1}{2} + \sqrt{\frac{1}{4}+\frac{b^2}{d(d+2)}}\right) 
= \frac{d}{d-1}\frac{\frac{b^2}{d(d+2)}}{\frac{1}{2} + \sqrt{\frac{1}{4}+\frac{b^2}{d(d+2)}}}.
\]
After cancelling common factors on both sides this becomes \emph{(c)}.

Let us fix $b > 0$. We shall show \emph{(b)} by backwards induction on $d$. We can use \eqref{j/j} for $d+2$, that is the equality 
\[ 
\frac{J_{d-1}}{J_{d+1}} = \frac{d+2}{d+1}+ \frac{b^2}{(d+3)(d+1)}\frac{J_{d+3}}{J_{d+1}}, \]
to rewrite \emph{(c)} in the form
\[
\frac{b^2}{(d+3)(d+1)}\frac{J_{d+3}}{J_{d+1}} \leq \frac{d+2}{d+1}\left(-\frac{1}{2} + \sqrt{\frac{1}{4}+\frac{b^2}{d(d+2)}}\right) 
=\frac{d+2}{d+1}\frac{\frac{b^2}{d(d+2)}}{\frac{1}{2} + \sqrt{\frac{1}{4}+\frac{b^2}{d(d+2)}}},
\]
which becomes
\begin{equation}\label{eq:goal2}
\frac{J_{d+1}}{J_{d+3}} \geq \frac{d}{d+3}\left(\frac{1}{2} + \sqrt{\frac{1}{4}+\frac{b^2}{d(d+2)}}\right).
\end{equation}
First notice that \emph{(b)} and equivalently \eqref{eq:goal2} hold for all $d \geq d_0(b)$ for some large enough $d_0(b)$ which depends only on $b$. To see this observe that the left-hand side of \eqref{eq:goal2} is strictly greater than $1$, whereas the right-hand side for large $d$ is of order $1-3/d+o(1/d)$. Now suppose \emph{(b)} holds for $d+4$, that is
\[ 
\frac{J_{d+1}}{J_{d+3}} \geq \frac{d+4}{d+3}\left(\frac{1}{2} + \sqrt{\frac{1}{4}+\frac{b^2}{(d+4)(d+6)}}\right) \]
and we want to show \emph{(b)} (induction step). By the above and the fact that \eqref{eq:goal2} and \emph{(b)} are equivalent, it is enough to show that
\[ 
\frac{d+4}{d+3}\left(\frac{1}{2} + \sqrt{\frac{1}{4}+\frac{b^2}{(d+4)(d+6)}}\right) \geq \frac{d}{d+3}\left(\frac{1}{2} + \sqrt{\frac{1}{4}+\frac{b^2}{d(d+2)}}\right). \]
This follows from $\frac{d+4}{d+6} \geq \frac{d}{d+2}$ and the estimate
\begin{align*}
\frac{d+4}{2} + \sqrt{\frac{(d+4)^2}{4}+b^2\frac{d+4}{d+6}} \geq \frac{d}{2} + \sqrt{\frac{d^2}{4}+b^2\frac{d}{d+2}}.
\end{align*}

Clearly \emph{(d)} immediately  follows from \emph{(b)} and \emph{(c)}.
\end{proof}

\begin{remark*}
Part \emph{(d)} improves on H\"older's inequality which gives $J_{d-1}^2 \leq J_{d+1}J_{d-3}$.
\end{remark*}

\begin{remark*}
Let us define for $d \geq -1$ and $b \geq 0$ the normalised integrals $\bar{J}_d(b) = J_d(b)/J_d(0)$ so that they are the Laplace transforms of the probability densities: if $d \geq 2$ and $\xi$ is a random vector in $\R^d$ uniformly distributed in the Euclidean unit sphere $S^{d-1}$, we check that (by rotational invariance)
\[ 
\bar{J}_{d-3}(b) = \E e^{\scal{v}{\xi}}, \]
for any vector $v \in \R^d$ of length $b$. Part \emph{(a)} for $b=0$ gives $J_{d-3}(0)/J_{d-1}(0)=d/(d-1)$. This allows to simplify \emph{(b),(c),(d)} rewritten in terms of $\bar{J}_d$ to get for $d\geq 2$
\begin{itemize}
\item[(a')] $\frac{b^2}{d(d+2)}\bar{J}_{d+1} = -\bar{J}_{d-1}+\bar{J}_{d-3}$,
\item[(b')] $\frac{\bar{J}_{d-3}}{\bar{J}_{d-1}} \geq \frac{1}{2} + \sqrt{\frac{1}{4}+\frac{b^2}{d(d+2)}}$,
\item[(c')] $\frac{\bar{J}_{d-1}}{\bar{J}_{d+1}} \leq \frac{1}{2} + \sqrt{\frac{1}{4}+\frac{b^2}{d(d+2)}}$,
\item[(d')] $\bar{J}_{d+1}\bar{J}_{d-3} \geq \bar{J}_{d-1}^2$.
\end{itemize}
\end{remark*}

The following lemma lies at the heart of our inductive argument. It compares the standard Gaussian measure of centred and shifted Euclidean balls.

\begin{lemma}\label{lm:balls}
Let $d \geq 2$ and $G$ be a standard Gaussian random vector in $\R^d$. For every $a \geq 0$, $R \geq \sqrt{d+2}$ and a vector $x \in \R^d$ of length $a\sqrt{d}$ we have
\[ 
\p{\|G\|\leq R} \leq \p{\|G-x\|\leq R\sqrt{1+a^2}}. \]
\end{lemma}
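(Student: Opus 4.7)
By rotational invariance of $G$ we may take $x = a\sqrt d\, e_1$ and set
$\phi(a) := \p{\|G - x\| \leq R\sqrt{1+a^2}}$, whose value at $a=0$ is $\p{\|G\| \leq R}$. The plan is to convert $\phi(a)$ into a one-dimensional integral involving the Laplace transforms $\bar J_{\bullet}$ from the remark after Lemma \ref{lm:Jd} and then to compare it with $\phi(0)$ using the estimates of that lemma.

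Writing $G = \|G\|U$ with $\|G\|$ chi-distributed with density $f_d$ and $U$ independent and uniform on $S^{d-1}$, and using $\E e^{\scal{v}{U}} = \bar J_{d-3}(\|v\|)$, a short polar-coordinates computation gives
\[
\phi(a) = e^{-a^2 d/2}\int_0^{R\sqrt{1+a^2}} f_d(\rho)\,\bar J_{d-3}(a\sqrt d\,\rho)\,\dd\rho.
\]
The substitution $\rho = \sigma\sqrt{1+a^2}$ rewrites this as $\phi(a) = \int_0^R f_d(\sigma) H(\sigma)\,\dd\sigma$, with
\[
H(\sigma) = e^{-a^2 d/2}(1+a^2)^{d/2}\,e^{-a^2\sigma^2/2}\,\bar J_{d-3}\bigl(a\sqrt{d(1+a^2)}\,\sigma\bigr).
\]
Unfolding $\bar J_{d-3}$ once more into a Gaussian sphere-average and completing the square gives the identity $\int_0^\infty f_d(\sigma) H(\sigma)\,\dd\sigma = 1$. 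Since also $\int_0^\infty f_d(\sigma)\,\dd\sigma = 1$, we deduce
\[
\phi(a) - \phi(0) = \int_R^\infty f_d(\sigma)\bigl(1 - H(\sigma)\bigr)\,\dd\sigma,
\]
so the lemma reduces to showing that this tail integral is non-negative whenever $R \geq \sqrt{d+2}$.

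The main obstacle is that last integral estimate. A Taylor expansion in $a$ at $0$ shows that $H(\sigma) - 1$ has the same leading sign as $2(d+2) - (\sigma^2 - (d+2))^2$, so $H > 1$ on a window that already contains the left endpoint $\sigma = R$ when $R^2 = d+2$, while $H < 1$ for large $\sigma$; in particular $1 - H$ is not non-negative pointwise on $[R, \infty)$, and the argument must exploit cancellation between the two regions. I would use the recurrence (a') of Lemma \ref{lm:Jd}, namely $\bar J_{d-3}(b) = \bar J_{d-1}(b) + \tfrac{b^2}{d(d+2)}\bar J_{d+1}(b)$, to split $H - 1$ into two pieces whose signs and sizes are controlled individually by the ratio bounds (b') and (c'), and then verify that the hypothesis $R \geq \sqrt{d+2}$ is precisely the threshold at which the positive tail contribution (from $\sigma$ large, where $H$ decays fast) outweighs the negative piece near $\sigma = R$.
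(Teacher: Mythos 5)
Your reduction is correct as far as it goes: the polar-coordinates identity for $\phi(a)$, the substitution $\rho = \sigma\sqrt{1+a^2}$, the formula for $H(\sigma)$, the normalisation $\int_0^\infty f_d\,H = 1$, and the resulting identity $\phi(a)-\phi(0)=\int_R^\infty f_d(\sigma)(1-H(\sigma))\,\dd\sigma$ are all right. The problem is that this recasts the lemma into a form that is genuinely harder than the original, and you stop precisely where the work begins. As you yourself observe, $1-H$ changes sign on $[R,\infty)$, and it is negative exactly where the chi density $f_d$ is largest (near $\sigma=\sqrt{d+2}$, which is in the bulk of the distribution) and positive only in the far tail where $f_d$ is exponentially small. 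So it is by no means evident that the positive tail ``outweighs'' the negative piece; in fact a priori the opposite would seem more plausible. The sentence beginning ``I would use the recurrence (a')\,\dots'' is a research plan, not an argument: you do not say how to split $H-1$, what the two pieces are, or how (b'), (c') control the weighted integrals of each piece against $f_d$. Moreover your sign analysis is a Taylor expansion at $a=0$; nothing is said about the sign structure of $1-H$ for general $a$, which need not match the infinitesimal picture.

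The paper sidesteps this difficulty entirely by \emph{differentiating in $a$} rather than comparing $\phi(a)$ with $\phi(0)$ directly. Writing $h(a,R)=\p{\|G-a\sqrt d\,e_1\|\le R\sqrt{1+a^2}}$ and computing $\partial_a h$ via the Fubini representation, then substituting $r=R\sqrt{1+a^2}\sqrt{1-x^2}$ and integrating by parts using $(\sqrt{1-x^2}^{\,d-1})'=-(d-1)x\sqrt{1-x^2}^{\,d-3}$, one reduces $\partial_a h\ge 0$ to the clean scalar inequality $\frac{J_{d-3}(b)}{J_{d-1}(b)}\ge\frac{d}{d-1}\bigl(\frac12+\sqrt{\frac14+\frac{b^2}{R^2 d}}\bigr)$ with $b=a\sqrt d\,R\sqrt{1+a^2}$, and since the right side decreases in $R$ this follows from Lemma~\ref{lm:Jd}(b) at $R=\sqrt{d+2}$. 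The derivative-and-integration-by-parts step is the crucial idea you are missing: it packages all the cancellation into ratios of $J$'s, which Lemma~\ref{lm:Jd} controls, instead of leaving a sign-changing integrand against $f_d$ that has to be estimated by hand. Integrating your $\phi'(s)\ge 0$ over $s\in[0,a]$ is morally the same thing; comparing $\phi(a)$ to $\phi(0)$ in one shot is not.
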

\begin{proof}
Since for $a=0$ we have equality, it is enough to show that the right-hand side,
\[h(a,R) = \p{\|G-a \sqrt{d} e_1\|\leq R\sqrt{1+a^2}}\]
is nondecreasing with respect to $a$ (by rotational invariance, for concreteness we can choose $x = ae_1$, where $e_1=(1,0,\ldots,0)$). Using Fubini's theorem we can write
\begin{align*}
h(a,R) = \frac{|S^{d-2}|}{\sqrt{2\pi}^{d-1}}\int_{0}^{R\sqrt{1+a^2}}\bigg\{r^{d-2}e^{-r^2/2}\int_{a\sqrt{d}-\sqrt{R^2(1+a^2)-r^2}}^{a\sqrt{d}+\sqrt{R^2(1+a^2)-r^2}}\phi(t)\dd t\bigg\} \dd r,
\end{align*}
where $\phi(t) = \frac{1}{\sqrt{2\pi}}e^{-t^2/2}$.
The derivative with respect to $a$ equals
\begin{align*}
\frac{\partial}{\partial a}h(a,R) &= \frac{|S^{d-2}|}{\sqrt{2\pi}^{d-1}}\int_{0}^{R\sqrt{1+a^2}}\bigg\{r^{d-2}e^{-r^2/2}\\
&\cdot\bigg[\phi\big(a\sqrt{d}-\sqrt{R^2(1+a^2)-r^2}\big)\left(\sqrt{d}+\frac{aR^2}{\sqrt{R^2(1+a^2)-r^2}}\right)\\
&-\phi\big(a\sqrt{d}+\sqrt{R^2(1+a^2)-r^2}\big)\left(\sqrt{d}
-\frac{aR^2}{\sqrt{R^2(1+a^2)-r^2}}\right)\bigg]\bigg\} \dd r.
\end{align*}
After changing the variables $r = R\sqrt{1+a^2}\sqrt{1-x^2}$ we see that this is nonnegative if and only if
\begin{align*}
\int_{0}^{1} \sqrt{1-x^2}^{d-3}\Bigg[&e^{-xa\sqrt{d}R\sqrt{1+a^2}}\left(x\sqrt{d}+\frac{aR}{\sqrt{1+a^2}}\right)\\
&-e^{xa\sqrt{d}R\sqrt{1+a^2}}\left(x\sqrt{d}-\frac{aR}{\sqrt{1+a^2}}\right)\Bigg] \dd x \ \geq  \ 0.
\end{align*}
This condition can be further simplified by integration by parts using $\left(\sqrt{1-x^2}^{d-1}\right)' = -(d-1)x\sqrt{1-x^2}^{d-3}$. We obtain an equivalent inequality
\[ 
\int_{0}^{1} \sqrt{1-x^2}^{d-3}\left(d-1-d(1+a^2)(1-x^2)\right)\cosh\left(aR\sqrt{d}\sqrt{1+a^2}x\right)\dd x \geq 0. \]
Let $b = a\sqrt{d}R\sqrt{1+a^2}$. Then
\[ 
1+a^2 = \frac{1}{2} + \sqrt{\frac{1}{4}+\frac{b^2}{R^2d}}. \]
Observe that
\[ 
  \int_{0}^{1} \sqrt{1-x^2}^d\cosh(bx) \dd x = \frac12 \int_{-1}^{1} \sqrt{1-x^2}^de^{bx} \dd x = \frac12 J_d(b). \]
Thus, the inequality we want to show becomes
\[ 
\frac{J_{d-3}(b)}{J_{d-1}(b)} \geq \frac{d}{d-1}\left(\frac{1}{2} + \sqrt{\frac{1}{4}+\frac{b^2}{R^2d}}\right). \]
For a fixed $b$, the right-hand side as a function of $R$ is clearly decreasing, so given our assumption $R \geq \sqrt{d+2}$ it is enough to consider $R = \sqrt{d+2}$, which follows from Lemma \ref{lm:Jd}\emph{(b)}.
\end{proof}

\begin{remark*}
The statement for $d=1$ remains true and was proved in \cite{Bob}, where it played a key role in the inductive proof of Pinelis' inequality \eqref{eq:pin}.
\end{remark*}

The last lemma will help us use the spherical symmetry of our problem.

\begin{lemma}\label{lm:expand}
Let $X$ be a rotationally invariant random vector in $\R^d$. Let $x \in \R^d$ and $t > 0$ be such that $t > \|x\|$. Then
\[ 
\p{\|X+x\|>t} = \p{\|X\| > -\theta\|x\| + \sqrt{t^2+\theta^2\|x\|^2-\|x\|^2}}, \]
where $\theta$ is the first coordinate of an independent of $X$ random vector uniformly distributed in the unit sphere $S^{d-1}$ in $\R^d$.
\end{lemma}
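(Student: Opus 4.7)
The plan is to exploit rotational invariance to reduce the problem to a one-dimensional computation involving the radial part of $X$ and a single coordinate of an independent uniform vector on the sphere.

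First, since $X$ is rotationally invariant, we can write $X = R\eta$ where $R = \|X\|$ and $\eta$ is uniformly distributed on $S^{d-1}$, independent of $R$. Again by rotational invariance of the distribution of $X$, the probability $\p{\|X+x\|>t}$ depends only on $\|x\|$, so we may assume without loss of generality that $x = \|x\|e_1$. Then
\[
\|X+x\|^2 = R^2 + 2R\|x\|\eta_1 + \|x\|^2,
\]
and $\eta_1$ has the same distribution as the $\theta$ in the statement.

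Next, I would fix $\theta$ and view the event $\{\|X+x\|^2 > t^2\}$ as a quadratic inequality in $R \geq 0$:
\[
R^2 + 2\theta\|x\|R - (t^2 - \|x\|^2) > 0.
\]
Since $t > \|x\|$, the constant term $-(t^2 - \|x\|^2)$ is strictly negative, so the quadratic (in $R$) has one positive and one negative root, namely
\[
R_\pm = -\theta\|x\| \pm \sqrt{t^2 - \|x\|^2 + \theta^2\|x\|^2}.
\]
One must verify that the radicand is nonnegative and that $R_- \leq 0$, so that only the condition $R > R_+$ contributes. Nonnegativity is immediate since $t^2 - \|x\|^2 > 0$. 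For the negative root, note that $R_- < 0$ is equivalent to $\sqrt{t^2 - \|x\|^2 + \theta^2\|x\|^2} > -\theta\|x\|$; if $\theta \geq 0$ this is trivial, and if $\theta < 0$ it follows after squaring from $t^2 - \|x\|^2 > 0$.

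Thus, conditionally on $\theta$,
\[
\p{\|X+x\|>t \mid \theta} = \p{R > -\theta\|x\| + \sqrt{t^2 + \theta^2\|x\|^2 - \|x\|^2} \mid \theta},
\]
and integrating out $\theta$ (which is independent of $R = \|X\|$) yields the claim. The argument is essentially bookkeeping; the only mild subtlety is justifying that the negative root of the quadratic cannot contribute, which is exactly where the hypothesis $t > \|x\|$ is used.
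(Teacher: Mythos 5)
Your proof is correct and follows essentially the same route as the paper's: polar decomposition $X \stackrel{d}{=} R\xi$ with $R = \|X\|$, reduction of $\scal{\xi}{x}$ to $\theta\|x\|$ by rotational invariance, solving the quadratic in $R$, and discarding the negative root using $t > \|x\|$. The only cosmetic difference is that the paper takes $\xi$ to be an auxiliary uniform vector independent of $X$ rather than asserting directly that $X/\|X\|$ and $\|X\|$ are independent, which sidesteps any fuss about the event $\{X=0\}$; otherwise the arguments coincide step for step.
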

\begin{proof}
Let $\xi$ be an independent of $X$ random vector uniformly distributed in the unit sphere $S^{d-1}$ in $\R^d$. By rotational invariance $X$ has the same distribution as $R\xi$, where $R = \|X\|$. We have
\[ 
\p{\|X+x\|>t} = \p{R^2 + 2R\scal{\xi}{x} + \|x\|^2 > t^2} \]
and by the rotational invariance of $\xi$, $\scal{\xi}{x}$ has the same distribution as $\theta\|x\|$ with $\theta$ being the first coordinate of $\xi$. The inequality $R^2+2R\theta\|x\|+\|x\|^2>t^2$ is equivalent to $R > -\theta\|x\| + \sqrt{t^2+\theta^2\|x\|^2-\|x\|^2}$ or $R < -\theta\|x\| - \sqrt{t^2+\theta^2\|x\|^2-\|x\|^2}$, but the second case does not hold as the right-hand side is negative, for we assume that $t > \|x\|$.
\end{proof}

\begin{proof}[Proof of Theorem \ref{thm:spheres}]
We fix $d \geq 2$ and proceed by induction on $m$. For $m=1$ we have to check that for $0 < t < |a_1|$ we have
\[ 
1 \leq C_0\cdot\p{\left\|a_1\frac{G_1}{\sqrt{d}}\right\|>t}. \]
This follows because $\p{\|G_1\|>\sqrt{d}} \geq 1/33$ by Lemma \ref{lm:sqrtd}.

Suppose the assertion is true for $m \geq 1$. We shall show it for $m+1$. We can assume that the $a_i$ are nonzero. By homogeneity we can also assume that $\sum_{i=2}^{m+1} a_i^2 = d$. If $t \leq \sqrt{d+2}\sqrt{\frac{a_1^2+d}{d}}$ we trivially bound the right-hand side as follows:
\[ 
\p{\left\|\sum_{i=1}^{m+1}a_i\frac{G_i}{\sqrt{d}}\right\|>t} = \p{\left\|\sqrt{\frac{a_1^2+d}{d}}G_1\right\|>t} \geq \p{\|G_1\|>\sqrt{d+2}}, 
 \]
and by Lemma \ref{lm:sqrtd} we get $C_0\cdot\p{\left\|\sum_{i=1}^{m+1}a_i\frac{G_i}{\sqrt{d}}\right\|>t}\geq 1$.

Now suppose $t > \sqrt{d+2}\sqrt{\frac{a_1^2+d}{d}}$. Notice that in particular $t > |a_1|$. Consider $v = \sum_{i=2}^{m+1} a_i\xi_i$. By independence and rotational invariance,
\begin{align*}
\p{\left\|\sum_{i=1}^{m+1}a_i\xi_i\right\|>t} &= \mathbb{P}\left(\left\|a_1e_1+v\right\|>t\right).
\end{align*}
Lemma \ref{lm:expand} applied to $X = v$ and $x = a_1e_1$ yields
\begin{align*}
\p{\left\|\sum_{i=1}^{m+1}a_i\xi_i\right\|>t}
&= \E_\theta\mathbb{P}_v\left(\|v\| > -\theta|a_1| + \sqrt{t^2+\theta^2a_1^2-a_1^2}\right).
\end{align*}
As a consequence, by the independence of $\theta$ and $v$, and the inductive hypothesis,
\begin{align*}
\p{\left\|\sum_{i=1}^{m+1}a_i\xi_i\right\|>t} 
&\leq C_0\cdot\E_\theta\mathbb{P}_{(G_i)_{i=2}^{m+1}}\left(\left\|\sum_{i=2}^{m+1}a_i\frac{G_i}{\sqrt{d}}\right\| > -\theta|a_1| + \sqrt{t^2+\theta^2a_1^2-a_1^2}\right).
\end{align*}
The vector $\sum_{i=2}^{m+1}a_i\frac{G_i}{\sqrt{d}}$ has the same distribution as $\sqrt{\frac{\sum_{i=2}^{m+1}a_i^2}{\sqrt{d}}}G_1 = G_1$. Therefore, applying again Lemma \ref{lm:expand} yields
\[ 
\p{\left\|\sum_{i=1}^{m+1}a_i\xi_i\right\|>t} 
\leq C_0\cdot\p{\|G_1+a_1e_1\|>t}.\]
To finish the inductive step it suffices to show that
\[ 
\p{\|G_1+a_1e_1\|>t} \leq \p{\left\|\sum_{i=1}^{m+1}a_i\frac{G_i}{\sqrt{d}}\right\|>t} = \p{\left\|\sqrt{\frac{a_1^2+d}{d}}G_1\right\|>t}. \]
This follows from Lemma \ref{lm:balls} applied to $a = \frac{|a_1|}{\sqrt{d}}$ and $R = t\sqrt{\frac{d}{a_1^2+d}} > \sqrt{d+2}$, which completes the proof.
\end{proof}

\begin{proof}[Proof of Theorem \ref{thm:general}]\renewcommand{\qedsymbol}{}
Let $\xi_1, \xi_2, \ldots$ be independent random vectors uniformly distributed in the unit Euclidean sphere $S^{d-1}\subset \R^d$, independent of the sequence $X_1, X_2, \ldots$. Since $X_i$ is rotational invariant, it has the same distribution as $R_i \xi_i$, where $R_i=\|X_i\|$. Note that almost surely $0 \leq R_i \leq 1$. Applying \eqref{eq:KO} (conditionally on the $R_i$) we get
\begin{align*}
 \p{\left\|\sum_{i=1}^ma_i X_i\right\|>t} & = \E_{(R_i)_{i=1}^m}\mathbb{P}_{(\xi_i)_{i=1}^m}\left( \left\|\sum_{i=1}^ma_i R_i \xi_i \right\|>t \right) \\
 & \leq C_0\cdot \E_{(R_i)_{i=1}^m} \mathbb{P}_{(G_i)_{i=1}^m}\left(\left\|\sum_{i=1}^m a_i R_i\frac{G_i}{\sqrt{d}}\right\|>t\right).
\end{align*}
To finish the proof notice that for any fixed numbers $R_i \in [0,1]$ we have
\begin{align*}
\p{\left\|\sqrt{\frac{\sum_{i=1}^m a_i^2 R_i^2}{d}} G_1\right\|>t} \leq  \p{\left\|\sqrt{\frac{\sum_{i=1}^m a_i^2 }{d}} G_1\right\|>t}. \ \Box
\end{align*}
\end{proof}

{\footnotesize

\vspace{1em}

\begin{tabular}{lcl}

Piotr Nayar, \texttt{nayar@mimuw.edu.pl} & \hspace*{5em} & Tomasz Tkocz, \texttt{ttkocz@princeton.edu} \\
University of Pennsylvania & \ \ & Princeton University \\
Wharton Statistics Department & \ \ & Mathematics Department \\
3730 Walnut St & \ \ & Fine Hall \\
Philadelphia, PA 19104 & \ \ & Princeton, NJ 08544 \\
United States & \ \ & United States
\end{tabular}
\end{document}